\newcommand{\CC}{\mathbf{C}}
\newcommand{\PP}{\mathbf{P}}
\newcommand{\JJ}{\mathcal{J}}
\newcommand{\OO}{\mathcal{O}}
\newcommand{\iddb}{\sqrt{-1} \partial \overline{\partial}}
\newcommand{\db}{\overline{\partial}}
\newcommand{\al}{\alpha}
\newcommand{\ep}{\epsilon}
\newcommand{\qa}{\quad}
\newcommand{\vp}{\varphi}
\newcommand{\noi}{\noindent}
\providecommand{\abs}[1]{\left|#1\right|}
\theoremstyle{plain}
\newtheorem{theorem}{Theorem}[section]
\newtheorem{proposition}[theorem]{Proposition}
\newtheorem{definition}[theorem]{Definition}
 \newtheorem{example}[theorem]{\textnormal{\textbf{Example}}}
\newtheorem{Standard Process}[theorem]{Procedures}
\newtheorem{remark}[theorem]{Remark}
\theoremstyle{remark}
\newtheorem{remark1}[theorem]{Remark}
\DeclareMathOperator{\ord}{ord}
\DeclareMathOperator{\Span}{span}
\begin{document}

\title{Skoda division of line bundle sections and pseudo-division}

\keywords{Skoda division, Line bundle, $\db$ methods, Finite generation, Section ring}

\subjclass[2010]{32J25(primary) and 14E30(secondary)}

\author{Dano Kim}

\date{}

\maketitle

\begin{abstract}

\noindent We first present a Skoda type division theorem for holomorphic sections of line bundles on a projective variety which is essentially the most general, compared to previous ones. It is derived from Varolin's theorem as a corollary. Then we revisit Geometric Effective Nullstellensatz and observe that even this general Skoda division is far from sufficient to yield stronger Geometric Effective Nullstellensatz such as `vanishing order $1$ division', which could be used for finite generation of section rings by the basic finite generation lemma. To resolve this problem, we develop a notion of pseudo-division and show that it can replace the usual division in the finite generation lemma. We also give a vanishing order 1 pseudo-division result when the line bundle is ample. 
\end{abstract}

\footnotetext{\noindent 

\noi Department of Mathematical Sciences, Seoul National University

\noi Gwanak-ro 1, Gwanak-gu, Seoul, Korea 151-747

\noi Email address: kimdano@snu.ac.kr

This work was supported by the  National Research Foundation of Korea grants NRF-2012R1A1A1042764 and No.2011-0030795, funded by the Republic of Korea government.}
%  \tableofcontents

%

\section{Introduction}

 Let $X$ be a smooth projective variety of dimension $n \ge 1$ and let $L$ and $G$ be line bundles on $X$. Let $g_1, \cdots, g_p$ be holomorphic sections in $H^0(X, G)$. If a holomorphic section $f \in H^0 (X, L + G)$ (where we use additive notation for the tensor power of line bundles) is written as $f = g_1 h_1  + \cdots  + g_p h_p$ for some $h_1 , \cdots, h_p \in H^0 (X, L)$, then we say $f$ is \textbf{divided} by $g_1, \cdots, g_p$. 
 
  Skoda-type global division theorems~\cite{Sk78,D, EL, O04, S06, EP} provide such division under certain efficient positivity conditions when we set $L$ as an adjoint line bundle (see Theorem~\ref{div0}). 
Its criterion for division is given in terms of \textit{multiplier ideal sheaves} of the singular weights at hand, which is natural in view of its (analytic) method of proof, $L^2$ methods for $\db$. (For other versions and methods on Skoda-type division, see Section 2.) 

 On the other hand, Geometric Effective Nullstellensatz(GEN for short)~\cite{EL} is a type of division theorem where its criterion for division is given in terms of \textit{vanishing order} along components of the common zero set of $g_1, \cdots, g_p$. As the name suggests, when $X$ is a complex projective space, GEN recovers  the usual algebraic effective Nullstellensatz (see the excellent introduction of \cite{EL}). GEN can be seen as a consequence of Skoda-type division. (See \cite{AW}, \cite{AW1} and the references therein for a different approach.)

 In this paper, we revisit these two types of division,  with  applications to  finite generation of section rings in mind. For Skoda division, we present the most general statement of the kind, \textbf{Theorem~\ref{maind}}, for line bundle sections on a smooth projective variety using Varolin's division theorem~\cite{Vd}. We carefully compare it with previous results in Section 2.

 More importantly for GEN-type division~\cite{EL}, we first note that the main Theorem (ii),(iii) of \cite[p.430]{EL} states the following in its notation : let $\JJ$ be the ideal sheaf generated by $g_1, \cdots, g_p$. Let $Z_1, \cdots, Z_t$ be the distinguished subvarieties with coefficients $r_i > 0$ determined by $\JJ$. Theorem (ii),(iii) above then says that a section which vanishes to order $ \ge \min(n+1, p) \cdot r_i $ at the general point of each $Z_i$ is divided by $g_1, \cdots, g_p$.  \footnote{Elizabeth Wulcan kindly informed the author that $\min(n+1,p)$ of \cite{EL} is reduced to $\min(n,p)$ in \cite[Corollary 7.2]{AW1}.  See also Example~\ref{ex11}. }

  This leaves the important question : does there exist an improvement on the vanishing order criterion by lowering the factor $\min(n+1, p)$?   Indeed, \cite[p.431]{EL} mentions that the results of
Koll\'ar and Brownawell on effective Nullstellensatz might suggest the hope that $\min(n+1, p) \cdot r_i $ could be reduced to $r_i$, but also explains that such hope has a counterexample as in Example~\ref{einlaz} below. 

 On the other hand, such reduction of $\min (n+1, p)$ to $1$ is indeed possible in some concrete examples (Example~\ref{ex11}). In general, if there is only one distinguished subvariety $Z_1$ and if such reduction of $\min (n+1, p) \cdot r_1$ to $r_1$ is possible, we will call such division as \textbf{vanishing order $1$ division}.

  Vanishing order $1$ division could have an immediate application toward finite generation of section rings as in Proposition~\ref{fglemma} if  $\rho$ is given as restriction of line bundle sections to the only distinguished subvariety $Z_1$.  
    
       The presence of the factor $\min(n+1, p)$ is due to its appearance in Skoda-type division as the lower bound of the exponent $q+1$  in Theorem~\ref{div0}. Since this exponent is depending on $p$, the number of sections, Skoda division is apparently missing some possible instances of division (e.g. see Example~\ref{limited}). Note that a role is played here by the obvious fact that in order to define a subvariety of codimension $k$, one needs more than $k$ equations, unless it is a complete intersection. 
       
        The lower bound $\min(n+1, p)$ is used in the version of $L^2$ methods for $\db$, the Skoda estimate, which is the key ingredient of the analytic proof of Theorem~\ref{div0}.  It does not seem possible to do better than $\min(n+1, p)$ in the Skoda estimate in general.

  Instead, we propose the following weakened version of division which seems to be the right notion to work with for such improvement.

\begin{definition}\label{psdiv}

 We say that $f \in H^0 (X, L + G)$ is \textbf{pseudo-divided} by $g_1, \cdots, g_p \in H^0(X, G)$ if there exist a line bundle $W$ on $X$ and nonzero
 holomorphic sections $w_1, \cdots, w_t \in H^0(X, W)$ such that for each $1 \le i \le t$, there exist sections
  $h_{ijk} \in H^0(X, L)$ ($j = 1, \cdots, p$, $k = 1, \cdots, t$) satisfying

\begin{equation}\label{pdiv}
  f w_i = g_1 H_{i1} + g_2 H_{i2}  + \cdots + g_p H_{ip}  
\end{equation}  
    where $H_{ij} := \sum_{k=1}^t w_k h_{ijk} $.

\end{definition}

  We were led to this definition while following a very natural idea of Forster and Ohsawa~\cite[p.96]{FO}.  Their setting is holomorphic functions on an affine variety where one can always take $w_1 v_1 + \cdots + w_t v_t = 1$ for some $v_i$'s. Therefore the current setting is much more general in that we have to deal with the presence of supplementary sections $w_k$'s rather than simply removing them. 
  
 It is clear that if $f$ is divided by $g_1, \cdots, g_p$, then $f$ is pseudo-divided by them.  The two notions are different as shown by the following example. We also note that in this example, vanishing order $1$ pseudo-division indeed holds.

\begin{example}\cite[Example 2.3]{EL}\label{einlaz}

Let $ X = \CC^2$ with coordinates $x,y$ and let $a=2$. Let $g_1 = x^2, g_2 = y^2$. In this case, the distinguished subvariety is $Z_1 = \{ (0,0) \}$ with coefficient $r_1 = 2$. 

 Let $f = xy$.   If we choose $w_1 = x, w_2 = y$, we get $f w_1 = g_1 w_2$ and $f w_2 = g_2 w_1$. Thus $f$ is pseudo-divided by $g_1, g_2$, but not divided by $g_1, g_2$. One can do simiarly for $a > 2$. 

\end{example} 

 It is easy to see that if $f$  is pseudo-divided by $g_1, \cdots, g_p$, then locally $f$ belongs to the integral closure of the ideal sheaf generated by $g_1, \cdots, g_p$. Note that the integral closure of $(x^a, y^a)$ is $(x,y)^a$ in the above example. 

 As an application of the notion of pseudo-division, we first show that in the finite generation lemma, Proposition~\ref{fglemma}, one can replace division by pseudo-division (\textbf{Theorem~\ref{fglemma2}}). This can be used in yet another approach to finite generation of canonical rings (\cite{BCHM} also cf. \cite{S06}, \cite{CL}, \cite{P}, \cite{BP},  \cite{B} and the references therein).

  As for GEN with pseudo-division, we obtain the following theorem in the simplest case of $g_1, \cdots, g_p$ when they generate the ideal sheaf of a smooth subvariety $Z \subset X$. This is a general result of vanishing order $1$ pseudo-division when the line bundle is ample.

\begin{theorem}\label{pdiv1}
Let $Z \subset X$ be a smooth subvariety of a smooth projective variety and suppose that $g_1, \cdots, g_p \in H^0 (X, G \otimes I_Z)$ are generating sections of the sheaf $G \otimes I_Z$ for a line bundle $G$. Suppose that $G$ is ample. Then there exists $m_0 \ge 1$ such that if $m \ge m_0$ and $f \in H^0 (X, K_X + mG)$ vanishes along $Z$ with order $\ge 1$, then $f$ is pseudo-divided by $g_1, \cdots, g_p$. 

\end{theorem}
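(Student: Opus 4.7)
The plan is to reduce pseudo-division of $f$ to a genuine Skoda-type division of the products $fw_i$, for a carefully chosen family of sections $w_1, \ldots, w_t$ that vanish to high order along $Z$, and then to recast the Skoda coefficients in the form required by Definition~\ref{psdiv} via a cohomology vanishing argument. Write $L := K_X + (m-1)G$ and $c := \codim_X Z$, and fix the integer $N := \max\{1,\, \min(n+1,p) - c\}$. Since $G$ is ample, the sheaf $aG \otimes I_Z^N$ is globally generated for all $a$ sufficiently large; setting $W := aG$, I take $w_1, \ldots, w_t$ to be a basis of $V := H^0(X, W \otimes I_Z^N)$, so that the $w_k$'s also generate $W \otimes I_Z^N$ as a coherent sheaf.

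For each $i$ the section $fw_i \in H^0(X, K_X + mG + W)$ vanishes along $Z$ to order at least $N+1$. I apply Theorem~\ref{maind} to divide $fw_i$ by the $g_j$'s using the singular weight $\psi := \log \sum_{j=1}^{p} |g_j|^2$ and exponent $q+1 := N+c \ge \min(n+1,p)$, with residual metric coming from the ample bundle $(m-q-1)G + W$. Because $Z$ is smooth and the $g_j$'s generate $G \otimes I_Z$, the matrix of their leading linear terms in transverse coordinates $(x_1, \ldots, x_c)$ has full rank $c$ along $Z$, whence $\sum |g_j|^2 \asymp |x|^2$ near $Z$; this turns the Skoda $L^2$ hypothesis on $fw_i$ into the convergent transverse integral of $|x|^{2(N+1-q-1)}$ in dimension $2c$. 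One therefore obtains
\begin{equation*}
fw_i \;=\; \sum_{j=1}^{p} g_j\, \tilde h_{ij}, \qquad \tilde h_{ij} \in H^0(X, L + W),
\end{equation*}
together with the output bound $\int_X |\tilde h_{ij}|^2 \bigl(\sum |g_j|^2\bigr)^{-q}\, e^{-\varphi_{L_0}} < \infty$. Expanding $\tilde h_{ij}$ transversally to $Z$ and again using $\sum |g_j|^2 \asymp |x|^2$, this integrability forces $\tilde h_{ij}$ to vanish along $Z$ to order at least $q - c + 1 = N$, so $\tilde h_{ij} \in H^0(X, (L+W) \otimes I_Z^N)$.

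It remains to rewrite each $\tilde h_{ij}$ in the pseudo-divided form $\sum_k w_k h_{ijk}$ required by Definition~\ref{psdiv}. Let $M$ be the kernel of the tautological surjection $V \otimes \OO_X \twoheadrightarrow W \otimes I_Z^N$ coming from the generators $w_1, \ldots, w_t$. Tensoring the short exact sequence $0 \to M \to V \otimes \OO_X \to W \otimes I_Z^N \to 0$ by the line bundle $L$ and passing to cohomology shows that the multiplication map $V \otimes H^0(X, L) \to H^0(X, (L+W) \otimes I_Z^N)$ is surjective provided $H^1(X, M \otimes L) = 0$. Since $L - K_X = (m-1) G$ is ample, Serre (or Nadel) vanishing gives this for all $m \ge m_0$, with $m_0$ chosen also to absorb the constraint $m \ge N + c$. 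One therefore produces $h_{ijk} \in H^0(X, L)$ with $\tilde h_{ij} = \sum_{k=1}^{t} w_k h_{ijk}$, assembling over $i, j, k$ to the pseudo-division identity~\eqref{pdiv}.

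I expect the main obstacle to be the second step: extracting the vanishing order $\ge N$ of the Skoda coefficients $\tilde h_{ij}$ along $Z$. This pivots on two compatible ingredients --- the Skoda output $L^2$-bound carries precisely the weight $\bigl(\sum |g_j|^2\bigr)^{-q}$, and smoothness of $Z$ together with the generating property of the $g_j$'s forces the local equivalence $\sum |g_j|^2 \asymp |x|^2$ in transverse coordinates. The exponents $q+1 = N+c$ are tuned so that this vanishing matches exactly the coherent ideal $W \otimes I_Z^N$ generated by the $w_k$'s, which is what the cohomological third step needs in order to close the argument.
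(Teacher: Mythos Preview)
Your strategy is quite different from the paper's. The paper never applies Skoda to the full collection $g_1,\dots,g_p$; instead, for each point of $Z$ it picks a codimension-sized subset $g_{i_1},\dots,g_{i_c}$ (indexed by $I$) cutting out $Z$ locally, uses the weight $\Psi_I=\bigl(\sum_l|g_{i_l}|^2\bigr)^{-c}$, and chooses the auxiliary $w_i$ to vanish along the \emph{extra} pole locus $Y_I$ of $\Psi_I$ away from $Z$, not along $Z$ itself. Varying $I$ over a finite cover of $Z$ yields a base-point-free system of $w_i$'s, and the second division $H_{ij}=\sum_k w_k h_{ijk}$ comes from a second application of Theorem~\ref{maind} to that base-point-free system, not from Serre vanishing on a syzygy sheaf.

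There is, however, a genuine gap at exactly the spot you flag as the main obstacle. You invoke Theorem~\ref{maind} and then claim the output bound $\int_X|\tilde h_{ij}|^2\bigl(\sum_j|g_j|^2\bigr)^{-q}e^{-\varphi}<\infty$, but the paper states explicitly right after Theorem~\ref{maind} that it does \emph{not} come with the estimate~\eqref{estim1}: Varolin's twist of $\db$ leaves an irremovable factor $\frac{1}{\tau+A}$ in~\eqref{estimate}, and the proof of Theorem~\ref{maind} shows this factor acquires a zero of the same order as $|g|^2$ near $\{g=0\}$. The effective output weight is therefore only $|g|^{-2(q-1)}$, which forces $\tilde h_{ij}$ to vanish along $Z$ only to order $\ge N-1$, one short of the $I_Z^N$ membership your cohomological third step needs. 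A repair is to use a classical Skoda theorem with $\alpha>1$ (Case~(1) of Theorem~\ref{div0}), which does carry the clean output estimate: the input becomes $\int|fw_i|^2|g|^{-2(\alpha q+1)}$, whose transverse radial part $\int_0^1 r^{2(N+c)-2\alpha(N+c-1)-1}\,dr$ converges for $1<\alpha<\tfrac{N+c}{N+c-1}$, while the output weight $|g|^{-2\alpha q}$ now forces vanishing order $>\alpha q-c>N-1$, hence $\ge N$ as required.
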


 For an alternative approach to finite generation of canonical rings, what one needs in addition to Theorem~\ref{pdiv1} is to allow $G$ to be a big line bundle and keep track of its base locus in the method of proof of Theorem~\ref{pdiv1}. It is well-known that finite generation of canonical rings is reduced to the case of $K_X$ big. Then such vanishing order $1$ pseudo-division for a big line bundle can be combined with the use of Theorem~\ref{fglemma2} and extension results as in \cite{K1}. 
 
 For the general case of $g_1, \cdots, g_p$,  see Remark~\ref{general} about pursuing the analogue of Theorem~\ref{pdiv1}. 

\qa

\noi \textbf{Acknowledgements}

\noi 
The author wishes to express his gratitude to Lawrence Ein for first pointing out that
in the algebraic setting, Skoda division in the case of $\alpha = 1$ can be reduced to the
case of  $\alpha  > 1$. He also wishes to thank Mihai P\u aun and Chen-Yu Chi for comments
during a talk on Theorem~\ref{maind} which helped remove slight restriction from its conditions and Yum-Tong Siu and Elizabeth Wulcan for helpful comments.

\section{Comparison of division theorems in the projective case}

 The original Skoda division theorem~\cite{Sk72} was proved for a pseudoconvex domain in $\CC^n$. There have been extensive developments since \cite{Sk72}. The first compact K\"ahler version  was given in \cite{Sk78}. We note that in the compact case, it is easier to compare results with different curvature conditions and also with concrete examples. Moreover, we restrict to the projective case since regularization of a singular hermitian metric has not been available to the full generality in the compact K\"ahler case. \footnote{A recent work of Junyan Cao~\cite{C} may shed light on that case. Also recently Xu Wang informed the author of his ongoing work on that case related to \cite{Ch}. } Also we restrict to the line bundle formulation which is standard and sufficient for our applications. For the setting of vector bundle morphisms and exact sequences, see e.g. \cite{Sk78, D, J1, J2, K2}. Also note that \cite{EP} is in the generality of division in $H^q (q \ge 0)$.

 In this section, we state Theorem~\ref{maind} and compare it with previous results \cite{Sk78}, \cite{D},
 \cite{EL}, \cite{O04}, \cite{S06}, \cite{EP} for line bundle sections on a smooth projective variety.

\begin{theorem}\label{maind}

 Let $X$ be a smooth projective variety and $G$ a line bundle on $X$. Let $g_1, \cdots, g_p \in H^0(X,G)$
  be holomorphic sections ($p \ge 1$).  Let $(G,e^{-\eta})$ and $(H,e^{-\psi})$ be  singular hermitian metrics with nonnegative
   curvature current for $G$ and for another line bundle $H$ on $X$, respectively.  Let $q \ge \min (n, p-1)$ be an integer where $n = \dim X$. 
     If a holomorphic section $f \in H^0 (X, K_X + H + (q+2) G)$ satisfies

 $$ I := \int_X \abs{f}^2   \frac{1}{\abs{g}^{2(q+1)}  }       e^{-\eta} e^{-\psi}   < \infty  \text{\;\; where \;\;} \abs{g}^2 = \sum_{i=1}^p \abs{g_i}^2  $$

\noindent then there exist $h_1, \cdots, h_p \in H^0(X, K_X + H + (q+1)G)$ such that

$$ f = g_1 h_1 + \cdots + g_p h_p  . $$

\end{theorem}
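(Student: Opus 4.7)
The plan is to derive Theorem~\ref{maind} as a direct corollary of Varolin's division theorem \cite{Vd}, by constructing the appropriate combined singular hermitian metric and translating the $L^2$ hypotheses.

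First, for $g = (g_1, \ldots, g_p) \in H^0(X, G)^p$, the function $\log \sum_{i=1}^p |g_i|^2$ transforms as the weight of a singular hermitian metric on $G$ (under a change of local trivialization of $G$ by a transition function $\tau$, it shifts by $\log |\tau|^2$, which is the correct cocycle), and its curvature current $\sqrt{-1}\,\partial\db \log |g|^2$ is nonnegative. Consequently,
$$\varphi := \psi + \eta + (q+1)\log |g|^2$$
is the weight of a singular hermitian metric on $H + (q+2)G$ with nonnegative curvature current. Under this weight, the integrability hypothesis $I < \infty$ reads
$$\int_X |f|^2 \, e^{-\varphi} < \infty,$$
which is precisely the finiteness needed to invoke Varolin's theorem with the line bundle $H + (q+2)G$, dividing sections $g_1, \ldots, g_p \in H^0(X, G)$, and exponent $q+1$.

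Next, I would invoke Varolin's division theorem under the threshold $q \ge \min(n, p-1)$, equivalently $q+1 \ge \min(n+1, p)$, to extract holomorphic sections $h_1, \ldots, h_p \in H^0(X, K_X + H + (q+1)G)$ with $f = \sum_i g_i h_i$ and with an $L^2$ norm bound controlled by $I$. The threshold $q+1 \ge \min(n+1, p)$ is exactly the Skoda exponent appearing in the analytic estimate at the heart of Varolin's proof — and its apparent sub-optimality is precisely what is later highlighted in the introduction as motivation for the pseudo-division framework.

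The main obstacle I anticipate is purely bookkeeping: matching Varolin's hypotheses (phrased in his own notation for singular metrics, line bundle twists and exponents) with the data $\varphi$, $f$, and $g_i$ constructed above. In particular, I would verify (i) that Varolin's formulation admits $\eta$ and $\psi$ as fully general singular weights with only nonnegative curvature current and no further regularity, (ii) that the pole locus of $\log |g|^2$ along the common zero set $Z(g_1, \ldots, g_p)$, as well as the pole loci of $\eta$ and $\psi$, are absorbed by the finiteness of $I$, and (iii) that the Skoda threshold in Varolin's formulation matches $q \ge \min(n, p-1)$ verbatim. Once these identifications are confirmed, the decomposition $f = \sum_i g_i h_i$ follows at once from Varolin's theorem.
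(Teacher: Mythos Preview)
Your overall strategy---derive the theorem from Varolin's division theorem---is exactly the paper's. But the ``bookkeeping'' you flag as the main obstacle is in fact where all of the content lies, and your sketch underestimates it in several concrete ways.

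First, you cannot simply bundle everything into a single weight $\varphi = \psi + \eta + (q+1)\log|g|^2$ and plug it into Varolin. Varolin's hypotheses keep the metric $e^{-\eta}$ on $G$ separate from $e^{-\psi}$ on $L$: the curvature condition is $\sqrt{-1}\,\partial\db\psi \ge qB\,\sqrt{-1}\,\partial\db\eta$ with $B$ a \emph{non-constant} function coming from the twist, and one also needs $|g|^2 e^{-\eta} < 1$. The latter fails for a general $\eta$ with nonnegative curvature; the paper fixes this by replacing $\eta$ with $\eta' = \sup(\eta, \log|g|^2)$ and observing that $I<\infty$ for $\eta$ implies $I<\infty$ for $\eta'$. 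The former is handled by using $\sqrt{-1}\,\partial\db\eta \ge 0$ and absorbing enough copies of $G$ into $L$ so that $qB \le q+2$ suffices.

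Second, and more seriously, your claim that the $h_i$ come ``with an $L^2$ norm bound controlled by $I$'' is wrong. Varolin's output estimate carries an unavoidable factor $\frac{1}{\tau+A}$, and the paper computes that this factor acquires a zero of the same order as $|g|^2$ as $|g|^2 \to 0$. So one does \emph{not} get the clean estimate; the paper explicitly says so. What saves the division statement (without the estimate) is that $\frac{1}{\tau+A}$ combined with $\frac{1}{|g|^{2q}}$ (for $q\ge 1$) is still bounded away from zero, which is enough to run the standard Stein-to-projective procedures (exhaust $X\setminus H$ by relatively compact Stein pieces, take limits via Montel, and extend by the $L^2$ Riemann extension lemma). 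You also need to check that the input-side factor $\frac{B}{\tau(B-1)}$ is bounded above and below so that it does not interfere with the hypothesis $I<\infty$; the paper does this with a specific choice of $\tau, A, B$ following McNeal--Varolin.

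In short: right theorem to invoke, but the verification you defer to ``bookkeeping'' is precisely the proof, and one of your expected conclusions (the $L^2$ bound on the $h_i$) is false as stated.
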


\noi Note that this statement does not come with the usual $L^2$ estimate against $I$ for $h_i$'s
such as

\begin{equation}\label{estim1}
  \int_X \abs{h_i}^2 \frac{1}{ \abs{g}^{2q} } e^{-\eta}
e^{-\psi} \le C \int_X \abs{f}^2   \frac{1}{\abs{g}^{2(q+1)}  }       e^{-\eta} e^{-\psi}  \end{equation}
 for some constant $C$.  This is because an auxiliary factor $\frac{1}{\tau + A}$ could not be removed from the estimate in the proof.
 
 Theorem~\ref{maind} follows from the important work of \cite{Vd} which reduced the \emph{usual} constant of Skoda 
 $\alpha > 1$ to $\alpha = 1$ using the twist of $\db$. Here $\alpha  \ge 1$ is the exponent of the singular weight $(\frac{1}{\abs{g}^2} )^{(q+1) \alpha}$ as in the above statement.  We mention that the very recent proof of the Strong Openness Conjecture~\cite{GZ} provides an alternative way of reducing $\al >1 $ to $\al =1$.   
 
  The following example shows that one cannot strengthen Theorem~\ref{maind} by replacing the line bundle and the weight
$ ( (q+2)G, \frac{1}{\abs{g}^{2(q+1)} } e^{-\eta} ) $ by $( (q+1)G, \frac{1}{\abs{g}^{2(q+1)} } ) $.

\begin{example} \cite{O04},\cite{S07} Let $X = \PP^1$ and take $H = \OO(0)$, $G = \OO(1)$. Let $g_1, g_2 \in H^0(X, G)$ be the two standard generating sections. Since $q= \min(1, 1) = 1$ in this case, we have $K_X + H + (q+1)G = \OO(0)$. The constant section of $\OO(0)$ satisfies the norm condition with respect to $\log (\abs{g_1}^2 + \abs{g_2}^2)$, but the division statement is not true since it will imply the existence of a global holomorphic section of $K_X + H + qG = \OO(-1)$ on $X$.

\end{example}

  Next, we make a careful comparison of Theorem~\ref{maind} with the
 previous global division theorems of Skoda type and conclude that it is new and essentially the most general one.

\begin{theorem}\label{div0}\cite{Sk78, D, EL, S06, EP}  In each case of the table below, we have division in the following sense: for every section $f \in H^0 (X, K_X  + (q+1)G + q' G + H)$ satisfying

$$ \int_X \abs{f}^2   \left( \frac{1}{\abs{g}^2} \right)^{q+1} e^{-\vp} e^{-\psi}   < \infty       $$

\noindent there exist $h_1, \cdots, h_p \in H^0(X, K_X + H + qG + q' G)$ such that $f = \sum h_i g_i$.

\end{theorem}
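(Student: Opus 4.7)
The most natural plan is to read Theorem~\ref{div0} as a compilation of the five cited works \cite{Sk78, D, EL, S06, EP}, and to deduce each row of the table uniformly as a corollary of Theorem~\ref{maind}, which the author has asserted is essentially the most general statement of its kind. Concretely, each row supplies a choice of singular hermitian metrics $(G, e^{-\varphi})$ and $(H, e^{-\psi})$ (with possibly extra positivity $q'G$) of nonnegative curvature current, together with the value $q \ge \min(n, p-1)$ and the $L^2$ integrability condition on $f$. Since Theorem~\ref{maind} already handles the critical Skoda exponent $\alpha = 1$, all earlier versions with $\alpha \ge 1$ are subsumed once the identifications are made.

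In more detail, for a given row I would proceed as follows. First, rewrite the target line bundle $K_X + H + (q+1)G + q'G$ in the format required by Theorem~\ref{maind}, namely $K_X + (H + q'G - G) + (q+2)G$, by absorbing one copy of $G$ into the auxiliary bundle $H$. Second, convert the row's positivity hypothesis into an explicit singular hermitian metric with nonnegative curvature on $G$ and on $H + q'G - G$; where the hypothesis involves analytic singularities (as in \cite{D, EL}), invoke Demailly's regularization and absorb the $\varepsilon$-loss into $q'G$. Third, check that the row's integrability hypothesis on $f$ matches the one in Theorem~\ref{maind} (this is immediate, since both are written with the same weight $|g|^{-2(q+1)}$). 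Fourth, apply Theorem~\ref{maind} directly to obtain the $h_i \in H^0(X, K_X + H + qG + q'G)$. Alternatively, in rows where the hypotheses are weaker and the original proof is actually sharper than what Theorem~\ref{maind} gives (if any), one would simply quote that reference verbatim.

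The main obstacle I expect is the bookkeeping of positivity conventions across the five references: \cite{Sk78, D} are formulated with smooth or analytic-singular metrics on a strictly positive $G$, \cite{EL} uses multiplier ideal sheaves with $H$ ample, \cite{S06} exploits a specific ampleness gain in the Skoda estimate of Siu, and \cite{EP} extends to division in $H^q$ for $q \ge 0$. The verification that each row's curvature data translates into the nonnegatively-curved singular hermitian metrics required by Theorem~\ref{maind}, with the auxiliary $q'G$ calibrated to absorb the precise slack built into that reference, is the real content of the proof; the analytic input itself is by now standard Skoda $L^2$/$\db$ theory, uniformized through Varolin's twisted $\db$-theorem \cite{Vd}.
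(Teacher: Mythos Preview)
Your plan misreads the role of Theorem~\ref{div0} in the paper. It is not a result the author proves; it is a labelled compilation of prior theorems, each row attributed to the cited reference. The paper offers no proof beyond the citations (with the remark that Case~(1) ``is not contained in \cite{Sk78},\cite{D}, but can be proved by their methods using (\ref{sdsd})''). The subsequent discussion is a \emph{comparison}: it argues that Case~(5), which \emph{is} Theorem~\ref{maind}, contains Cases~(1), (2), (4) and most of (3). So your program of deriving the rows from Theorem~\ref{maind} is close in spirit to what the paper does in that comparison paragraph, but it is not a proof of Theorem~\ref{div0}; rather it is a proof that Theorem~\ref{maind} subsumes the earlier results.

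There is also a concrete gap in your reduction. Your rewriting
\[
K_X + H + (q+1)G + q'G \;=\; K_X + (H + q'G - G) + (q+2)G
\]
requires $H + q'G - G$ to carry a singular metric of nonnegative curvature current in order to feed into Theorem~\ref{maind}. For Case~(3) one has $q'=0$, so you would need $H - G$ pseudoeffective; but the hypotheses of \cite{EL} only give $H$ nef and big, with no control on $H-G$. The paper says this explicitly: ``Case~(5) contains all the other cases except that in Case~(3), $q'=0$ is allowed.'' Your hedge (``if any, quote that reference verbatim'') is exactly what is needed here, but you should state plainly that Case~(3) is the one where the reduction to Theorem~\ref{maind} fails and the original algebraic proof via Skoda complexes must be cited. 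Finally, note that Case~(5) \emph{is} Theorem~\ref{maind}, so deriving it from Theorem~\ref{maind} is vacuous.
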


\begin{tabular}{| l | l | l | l | l | c | c |}
\hline
    & Results & $q \ge$ & $q'$ &  $e^{-\vp}$ for $q' G$ & $H$ &  $e^{-\psi}$ for $H$ \\ \hline
   (1) &  \emph{\cite{Sk78},\cite{D}, ...} & $p'$ &  1  &     $ (\frac{1}{\abs{g}^2})^{\ep} (e^{-\eta})^{1-\ep}$        & pseudoeffective & arbitrary \\ \hline
   (2) & \emph{\cite[2.4]{S06}}  & n & 1 &  $\frac{1}{\abs{g}^2}$    & pseudoeffective & arbitrary  \\ \hline
   (3) &  \emph{\cite[1.1(ii)]{EL}} & $p'$  & $ 0 $  & none  & nef and big & minimal sing.   \\  \hline
   (4) & \emph{\cite[4.1]{EP}}   & n & 1 &  $\frac{1}{\abs{g}^2}$      & "nef and good"  & see  \cite[4.1]{EP}   \\ \hline
   (5) &  Theorem~\ref{maind} &  $p'$   & 1  & $e^{-\eta}$ & pseudoeffective& arbitrary \\ 
        \hline

\end{tabular}

\qa

\noindent  Here $p'$ denotes $\min (n,p-1)$ and $e^{-\eta}$ is an arbitrary singular hermitian metric with nonnegative curvature current. The term `minimal sing' refers to the singular hermitian metric with minimal singularities in the sense of Demailly.  Strictly speaking, Case (1) statement is not contained in \cite{Sk78},\cite{D}, but can be proved by their methods using (\ref{sdsd}).  We mention that in the Cases (3) and (4), $(H, e^{-\psi})$ is assumed to be algebraic and the proof
  is accordingly algebraic in the sense that it uses cohomology vanishing and the exactness of Skoda complexes (see \cite{L}).  
  
  We will see shortly that Case (5) contains all the other cases except that in Case (3), $q' =0$ is allowed. However, this is at the expense of requiring $H$ to be nef and big, which is much stronger than pseudoeffectivity. In this sense, Theorem~\ref{maind} is essentially the most general. 
  
 Case (5) contains Cases (3) and (4) as seen from the following proposition and the fact that  a nef and good line
  bundle possesses a singular hermitian metric with its Lelong number zero at every point \cite{Ru}. On the other hand, a line bundle possessing a metric with everywhere zero Lelong number may not be nef and good (see the example of \cite[p.145]{DEL} and \cite[Example 5.3]{F} for a semipositive line bundle with $\kappa = 0, \nu =1$). 
  
\begin{proposition}\cite{K3}   Let $\vp$ and $\psi$ be two plurisubharmonic functions on a complex manifold $X$. Suppose that $\psi$ has zero
 Lelong number at every point of $X$. Then we have
 $$ \JJ( \vp + \psi ) = \JJ (\vp ) .$$

\end{proposition}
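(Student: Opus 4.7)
The inclusion $\JJ(\vp + \psi) \subseteq \JJ(\vp)$ is the easy direction: since $\psi$ is plurisubharmonic, it is locally bounded above, so on a small neighborhood $U$ of any point we have $e^{-\psi} \geq c_U > 0$, hence $e^{-\vp-\psi} \geq c_U \, e^{-\vp}$, which gives the containment of multiplier ideal sheaves. The whole content is in the reverse inclusion $\JJ(\vp) \subseteq \JJ(\vp + \psi)$, which I would prove locally: fix a point $x \in X$ and a germ $f \in \JJ(\vp)_x$ and show that $|f|^2 e^{-\vp - \psi}$ is integrable on a neighborhood of $x$.

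The plan is to combine two inputs. First, apply the Strong Openness theorem of Guan--Zhou~\cite{GZ} (already cited earlier in the paper) to the psh weight $\vp$: since $f \in \JJ(\vp)_x$, there exists $\varepsilon > 0$ such that $|f|^2 e^{-(1+\varepsilon)\vp}$ is integrable on some open neighborhood $U$ of $x$. Second, invoke Skoda's integrability theorem: since $\psi$ has Lelong number zero at every point, for every $t > 0$ the function $e^{-t\psi}$ is locally integrable. Shrinking $U$ if necessary, we may assume $\int_U e^{-\frac{1+\varepsilon}{\varepsilon}\psi} < \infty$ as well.

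Now I would apply H\"older's inequality with conjugate exponents $p = 1+\varepsilon$ and $q = (1+\varepsilon)/\varepsilon$, splitting the integrand as
\[
|f|^2 e^{-\vp-\psi} \;=\; \bigl(|f|^{2} e^{-(1+\varepsilon)\vp}\bigr)^{1/p} \cdot \bigl(|f|^{2} e^{-\frac{1+\varepsilon}{\varepsilon}\psi}\bigr)^{1/q},
\]
which gives
\[
\int_U |f|^2 e^{-\vp-\psi} \;\leq\; \Bigl(\int_U |f|^{2} e^{-(1+\varepsilon)\vp}\Bigr)^{1/p} \Bigl(\int_U |f|^{2} e^{-\frac{1+\varepsilon}{\varepsilon}\psi}\Bigr)^{1/q}.
\]
The first factor is finite by the strong openness input, and the second is finite because $|f|^2$ is locally bounded and $e^{-t\psi}$ is locally integrable for all $t$. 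This yields $f \in \JJ(\vp+\psi)_x$, completing the proof.

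The only non-routine ingredient is the strong openness step; without it one would need an entirely different argument (e.g. an Ohsawa--Takegoshi or approximation-based bootstrap) to upgrade integrability of $|f|^2 e^{-\vp}$ to integrability of $|f|^2 e^{-(1+\varepsilon)\vp}$. Given that \cite{GZ} is already invoked in the paper, this is the cleanest route and the H\"older/Skoda combination is then straightforward.
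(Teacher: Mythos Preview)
The paper does not actually prove this proposition; it is quoted from \cite{K3} and used as a black box (to argue that Cases (3) and (4) in the comparison table are subsumed by Case (5)). So there is no proof in the present paper to compare against.

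That said, your argument is correct. The easy inclusion $\JJ(\vp+\psi)\subseteq\JJ(\vp)$ is exactly as you wrote. For the reverse inclusion, the three ingredients you invoke are the right ones and fit together as claimed: Guan--Zhou strong openness upgrades $f\in\JJ(\vp)_x$ to $\int_U |f|^2 e^{-(1+\varepsilon)\vp}<\infty$ for some $\varepsilon>0$; Skoda's integrability lemma turns the zero-Lelong-number hypothesis into $e^{-t\psi}\in L^1_{\mathrm{loc}}$ for every $t>0$; and your H\"older factorization with $p=1+\varepsilon$, $q=(1+\varepsilon)/\varepsilon$ is algebraically correct and yields the desired finiteness. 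This is the standard and expected route, and indeed the paper itself remarks (just after stating Theorem~\ref{maind}) that \cite{GZ} provides exactly this kind of ``$\alpha>1$ to $\alpha=1$'' reduction, so your use of strong openness is entirely in the spirit of the surrounding discussion.
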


  Finally, we also point out the following strengthening of \cite[Corollary 4.4]{EP} which had required $B$ to be nef and good.
The condition on $B$ as in Proposition~\ref{44} is much weaker than being nef and good (cf. \cite[Example 2.3.7]{L}).

\begin{proposition}\label{44}

 Let $X$ be a smooth projective variety and $L = K_X + B$ an adjoint line bundle. Suppose that the line bundle $B$
 admits a singular metric $(B, h)$ with nonnegative curvature such that $\JJ(h) = \OO_X$. Suppose also that $L$ is
 generated by global sections. Then the section ring $R(L)$ is generated by $ \oplus_{m \le n+2} H^0 (X, mL)$.

\end{proposition}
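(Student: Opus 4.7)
The plan is to apply Theorem~\ref{maind} to divide any section of degree $m \ge n+3$ by globally generating sections of $L$, reducing the degree by one at each step, and then invoke the basic finite generation lemma (Proposition~\ref{fglemma}).

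First I would use the global generation of $L$ to pick sections $g_1, \ldots, g_p \in H^0(X, L)$ with no common zeros. These induce a smooth hermitian metric $e^{-\eta}$ on $L$ with semi-positive curvature (the pullback of the Fubini-Study metric via the morphism $X \to \PP^{p-1}$), and because $X$ is compact and the $g_i$'s have no common zeros, $|g|^2 = \sum_i |g_i|^2$ is bounded below away from zero in any fixed local frame.

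Fix $m \ge n+3$ and an arbitrary $f \in H^0(X, mL)$. The key step is to apply Theorem~\ref{maind} with
$$ G := L, \qquad H := B + (m-n-3)L, \qquad q := n, $$
equipping $G$ with $e^{-\eta}$ and $H$ with the metric $e^{-\psi} := h \otimes e^{-(m-n-3)\eta}$, both of which have nonnegative curvature current. A direct degree check yields $K_X + H + (q+2)G = mL$ and $K_X + H + (q+1)G = (m-1)L$. For the integrability hypothesis $I < \infty$, every factor other than the one carrying the singularity of $h$ is smooth and bounded on compact $X$ (since $|g|^2$ is bounded below and $e^{-\eta}$ is smooth), while the remaining factor coming from $h$ is locally integrable because $\JJ(h) = \OO_X$. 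Theorem~\ref{maind} then produces $h_1, \ldots, h_p \in H^0(X, (m-1)L)$ with $f = g_1 h_1 + \cdots + g_p h_p$.

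Iterating this division from $m$ down to $n+3$ and then invoking Proposition~\ref{fglemma} finishes the argument, showing that $R(L)$ is generated by $\oplus_{m \le n+2} H^0(X, mL)$. The point, and the reason for the weakened hypothesis compared with \cite[Corollary 4.4]{EP}, is that the extra tensor power $(m-n-3)L$ is bundled into the auxiliary bundle $H$ of Theorem~\ref{maind}: this allows one to work in a regime where $H$ need only be pseudoeffective with a weight having trivial multiplier ideal, rather than nef and good. There is no serious technical obstacle beyond verifying the integrability, which is routine given $\JJ(h) = \OO_X$ and the compactness of $X$.
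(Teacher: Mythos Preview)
Your argument is correct and is precisely the ``easy consequence of Theorem~\ref{div0}'' that the paper has in mind: choose global generators of $L$, apply Theorem~\ref{maind} with $G=L$, $q=n$, and $H=B+(m-n-3)L$ carrying the metric $h\otimes e^{-(m-n-3)\eta}$, and use $\JJ(h)=\OO_X$ together with the boundedness of $1/|g|^2$ to get $I<\infty$. One small remark: the appeal to Proposition~\ref{fglemma} at the end is superfluous and does not literally fit (there $d_i\in R_{m_1}$ and division is required for all $m\ge m_1$, whereas here $g_i\in R_1$ but division is only available for $m\ge n+3$); the iterated division itself already shows, by induction on $m$, that every section of degree $\ge n+3$ lies in the subalgebra generated by $\oplus_{m\le n+2}H^0(X,mL)$.
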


\noi This is an easy consequence of \cite[Theorem 4.1]{EP} (or Theorem~\ref{div0}).

\qa

\section{Proof of Theorem~\ref{maind}}

Before the proof, we recall the following fundamental steps used  in the method of $L^2$ estimates for a normal projective variety $X \subset \PP^N$.

 \begin{Standard Process}[From Stein to Projective]\label{sdsd}

\qa

 \begin{enumerate}[(a)]

 \item

 Choose a hyperplane section $H \subset \PP^N$ where $X$ is embedded in $\PP^N$ so that $X \setminus H$ is an
 affine variety. Let $H$ contain the singular locus of $X$.

\item

 Write the Stein manifold $X \setminus H$ as an infinite union of $\cup_{t \ge 1} \Omega_t$ where $\Omega_t$ is
 relatively compact Stein open subset in $\Omega_{t+1}$.

\item

 Apply one instance of H\"ormander's  $L^2$ existence on $\Omega_t$.

\item

 Use Montel type lemma and $L^2$ Riemann extension lemma (see e.g. \cite[Prop. 2.17 and 2.18]{K1}    
  \footnote{which contain the typos of `multi-valued section', which should be
removed.}) taking the limit as $t \to \infty$.

 \end{enumerate}

 \end{Standard Process}

\noindent One reason we need (\ref{sdsd}) is because we need to regularize a singular hermitian metric on the sequence
of
 $\Omega_t$'s. Therefore, we need $X$ to be projective, not just compact K\"ahler. Since we can let $H$ include the singular
 points of $X$, we can formulate $L^2$ extension and division theorems on normal projective varieties. Normality is needed to extend holomorphic sections on the regular part of $X$ to the entire $X$. In this paper, we mostly restrict to smooth $X$ following \cite{EL}. 

\begin{remark}

 To use the above procedures in a possibly non-projective setting, \cite{Vd} introduces the notion of $X$ being
 essentially Stein, which means that for a compact K\"ahler $X$, there exists a closed analytic subvariety $Z$ such
 that $X \setminus Z$ is Stein. We note that there is indeed an example of non-projective, essentially Stein compact
 K\"ahler manifold in \cite{HV} in the context of the problems of compactifying $\CC^n$.

\end{remark}

\noindent \textit{Proof of Theorem~\ref{maind}.} We will use Varolin's main division theorem,   stated here for the  convenience of the reader (see \cite{Vd} for the full details on its setting).

\begin{theorem}\cite[Theorem 1]{Vd}\label{varolin}
 Let $X$ be a smooth projective variety. Let $(L, e^{-\psi})$ and $(G, e^{-\eta})$ be two singular hermitian line bundles on $X$. Let $g_1, \cdots, g_p \in H^0(X, G)$. Let

 $$ q = \min(p-1, n)   \text{  and  }    \xi = 1 - \log( \abs{g}^2 e^{-\eta} )   .$$

\noi Let $(\phi, F, q)$ be a Skoda triple. Set $\tau, A, B$ as in \cite{Vd}. Assume that

\begin{equation}\label{curv}
\iddb \psi \ge qB \iddb \eta
\end{equation}
 and that

\begin{equation}\label{eta1}
 \abs{g}^2 e^{-\eta} <    1  .
\end{equation}

\noi Then for every section $f \in H^0 (X, K_X + L)$ such that

$$ \int_X   \abs{f}^2  \frac{B}{\tau (B-1) }    \frac{1}{ (\abs{g}^2 e^{-\eta} )^{q+1}  }  e^{ \phi (\xi) } e^{-\psi} < \infty  ,$$

\noi  there exist sections $h_1, \cdots, h_p \in H^0(X, K_X + L - G)$  satisfying

 $$ f = \sum h_i g_i $$ and

\begin{equation}\label{estimate}
 \int_X \abs{h}^2 \frac{1}{\tau + A}    \frac{1}{ (\abs{g}^2 e^{-\eta} )^{q}  }  e^{ \phi (\xi) } e^{-\psi - \eta}
  \le     \int_X   \abs{f}^2  \frac{B}{\tau (B-1) }    \frac{1}{ (\abs{g}^2 e^{-\eta} )^{q+1}  }  e^{ \phi (\xi) }
  e^{-\psi}.
\end{equation}

\end{theorem}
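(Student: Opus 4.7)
The plan is to prove Theorem~\ref{varolin} by the $L^2$-method for $\db$, following Skoda's classical scheme but with the \emph{twist} of $\db$ introduced by Ohsawa--Takegoshi and refined by McNeal--Varolin to reach the endpoint Skoda exponent $\al = 1$. The first step is to invoke Procedures~\ref{sdsd}: embed $X$ in some $\PP^N$, choose a hyperplane $H$ so that $X \setminus H$ is Stein, and exhaust $X \setminus H = \bigcup_t \om_t$ by relatively compact Stein open subsets on which the singular weights $\psi$ and $\eta$ are replaced by smooth approximations $\psi_\nu \downarrow \psi$ and $\eta_\nu \downarrow \eta$ still with nonnegative curvature.

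Next, on the Stein piece $\om_t$ I would produce a non-holomorphic candidate by Skoda's ansatz
\[
v_j \;:=\; \frac{\bar g_j\, e^{-\eta_\nu}}{\abs{g}^2 e^{-\eta_\nu}}\, f, \qquad \sum_j g_j v_j = f,
\]
and seek a correction $\beta = (\beta_1,\dots,\beta_p)$ lying pointwise in the orthogonal complement of $(g_1,\dots,g_p)$ (with respect to the $e^{-\eta_\nu}$-twisted inner product) with $\db\beta = \db v$, so that $u := v - \beta$ is holomorphic and still satisfies $\sum_j g_j u_j = f$. Since $\beta$ is forced orthogonal to $g$, its construction reduces to a single twisted $\db$-equation on $\om_t$ for a $(0,1)$-form with values in $K_X + L - G$, and Theorem~\ref{varolin} becomes an $L^2$ bound for the minimal solution against the weight $\tfrac{B}{\tau(B-1)}(\abs{g}^2 e^{-\eta_\nu})^{-(q+1)} e^{\phi(\xi)} e^{-\psi_\nu}$.

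The heart of the argument is Varolin's twisted Bochner--Kodaira--Nakano--Skoda inequality. Using the Skoda triple $(\phi, F, q)$ one manufactures smooth positive functions $\tau, A, B$ as composites with $\xi = 1 - \log(\abs{g}^2 e^{-\eta_\nu}) \ge 1$ (where (\ref{eta1}) is used) for which a twisted Bochner--Kodaira--Nakano identity yields a pointwise inequality controlling $\tau\abss{\db\al}$ above by the sum of $(\tau + A)\abss{\db^*\al}$ and a curvature commutator whose dominant term is $\iddb\psi_\nu - qB\iddb\eta_\nu$, nonnegative by hypothesis (\ref{curv}). An orthogonal decomposition of $\al$ into its component along $\bar g / \abs{g}$ and its perpendicular piece then makes the Koszul combinatorics in $p$ variables collapse at exponent $q + 1 = \min(n,p-1) + 1$, producing precisely the Skoda weight $(\abs{g}^2 e^{-\eta_\nu})^{-(q+1)}$ together with the prefactor $B/(\tau(B-1))$ on the right-hand side. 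This is the step where the twist replaces Skoda's strictly positive $(\al - 1)$-factor --- which previously forced $\al > 1$ --- by the tamer $1/(B-1)$, so that the endpoint $\al = 1$ is reached.

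Having solved the twisted $\db$-problem on $\om_t$ via H\"ormander's existence theorem, one obtains a holomorphic $u^{(t,\nu)}$ on $\om_t$ with $\sum_j g_j u^{(t,\nu)}_j = f$ and the estimate (\ref{estimate}). Letting $\nu \to \infty$ (monotone convergence of the regularizations) and then $t \to \infty$ (Montel together with the $L^2$ Riemann extension across $H$ from Procedures~\ref{sdsd}(d)) produces global holomorphic sections $h_j \in H^0(X, K_X + L - G)$ with the required bound. The main obstacle is the pointwise twisted curvature computation in the third step: producing an explicit Skoda triple $(\phi, F, q)$ and corresponding $\tau, A, B$ for which the twisted commutator is nonnegative on the orthogonal complement of $g$ while the residue in the parallel direction collapses to exactly the Skoda weight with the correct prefactor --- this is the delicate twist algebra carried out in \cite{Vd}.
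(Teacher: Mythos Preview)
The paper does not contain a proof of Theorem~\ref{varolin}: it is quoted verbatim from Varolin's paper \cite{Vd} (``stated here for the convenience of the reader (see \cite{Vd} for the full details on its setting)'') and used as a black box in the derivation of Theorem~\ref{maind}. There is therefore no ``paper's own proof'' to compare your proposal against.

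That said, your outline is a faithful high-level sketch of Varolin's original argument in \cite{Vd}: the Skoda ansatz $v_j = \bar g_j e^{-\eta} f / (\abs{g}^2 e^{-\eta})$, the reduction to a twisted $\db$-equation on the orthogonal complement of $g$, the twisted Bochner--Kodaira--Nakano identity producing the functions $\tau, A, B$ and the crucial replacement of Skoda's $(\alpha-1)$-factor by $1/(B-1)$, followed by H\"ormander existence and the limiting procedure. The only caveat is that Varolin in \cite{Vd} works directly on essentially Stein manifolds and does not himself invoke the regularization-and-exhaustion package of Procedures~\ref{sdsd}; that package is what the \emph{present} paper layers on top of Theorem~\ref{varolin} in order to deduce Theorem~\ref{maind}, so in your sketch you have conflated the two stages somewhat. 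If your goal is to prove Theorem~\ref{varolin} as stated, the Stein exhaustion and approximation of $\psi, \eta$ are still needed, but they belong to the ambient setup rather than to the core twist argument.
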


\qa

 To prove Theorem~\ref{maind}, we need to make Theorem~\ref{varolin} comparable to the setting of Theorem~\ref{div0}. First of all, the norm criterion above comes with extra factors such as $  \frac{B}{\tau (B-1) } ,  e^{-\eta},  e^{ \phi (\xi) } $.
 Their singularities should be examined before we could use this norm criterion to divide sections. Note that each of $\tau$,
 $A,B$ (used for twisting of $\db$) is a function of $\lambda$ where $\lambda = C - \log (\abs{g}^2 e^{-\eta}) + \ep^2 $ for appropriate constants $C$
 (depending on $\Omega_t$).
 \\

 First, we consider the conditions for the metric $e^{-\eta}$. In applications, the singular weight given by a power
of $\frac{1}{\abs{g}^2}$ carries a clear
 geometric meaning, and we do not want to alter it by singularity of the auxiliary metric $e^{-\eta}$. So
 it would be nice to take $e^{-\eta}$ as a smooth metric if possible.

 On the other hand, we have the curvature condition \eqref{curv} where $B$ is a function for which it is
  known that $qB \le q + 2$. Since $qB$ is not a constant, it is in general
hard to satisfy $\eqref{curv}$ unless $\iddb \eta \ge 0$. If $\iddb \eta \ge 0$, we can let $L$ contain a multiple of
$G$ so that $\eqref{curv}$ is satisfied.

 If we require $\eta$ to satisfy $\iddb \eta \ge 0$ and also to be smooth, it is too restrictive. The best compromise
 is to let $\eta$ be a metric \emph{less singular} (defined as  in \cite{D})  than $\log \abs{g}^2$ with nonnegative curvature current $\iddb \eta$.
 Then \eqref{eta1} is also automatically satisfied. Theorem~\ref{maind} for general $\eta$ without necessarily being less singular than   $\log \abs{g}^2$  then follows:  take $\eta' = \sup (\eta, \log \abs{g}^2)$ which defines a singular hermitian metric of $G$ less singular than $\log \abs{g}^2$. The given section $f$ in Theorem~\ref{maind} satisfies $I < \infty$ with respect to $e^{-\eta}$ and therefore also with respect to $e^{-\eta'}$.

\qa

Second, we consider  the conditions on $\tau, B, A$.  It is enough to take $ \phi = 0$. It
  would be best to have constants $C_1$ and $C_2$ such that $\displaystyle 0 < C_1 \le \frac{B}{\tau (B-1)}  \le  C_2  \quad \text{ and } \qa 0 < C_1 \le \frac{1}{\tau + A} \le C_2  .$  It is possible to obtain the first condition for $\frac{B}{\tau (B-1)}$, but the author does not know
whether a choice of $\tau, A, B$ for the second one is possible. Instead, for our purpose in this paper, it is enough
to take, following \cite{MV}, $\tau (x) = 2 + \log (\frac{2}{e} e^x - 1) $, $A(x) = \frac{2}{e} e^x$ and $B(x) = 1 +
\frac{1 + F'(x)}{q \tau (x)} $. Then $\frac{B}{\tau (B-1)} = \frac{1}{\tau} + \frac{q}{ 1 + F'} =  \frac{1}{\tau} +
\frac{q}{\tau'}$. Since $\tau \ge 2$ and $2 \ge \tau' \ge 1$, we get $\displaystyle \frac{1}{2} + q \ge \frac{1}{\tau} + \frac{q}{\tau'} \ge \frac{q}{2}  $.
 In other words, $\frac{B}{\tau (B-1)}$ cannot have \textit{a pole or a zero}. Similarly, $\frac{1}{\tau + A}$ cannot have a pole (i.e. going to $+\infty$) since $\tau \ge 2$ and $A \ge
\frac{2}{e}$.

 Now let us examine what happens when $\tau + A$ becomes large. Note that there are two constants $C', C''$ such that $x +
 C' \le \tau (x) \le x + C''$. When  $\frac{1}{\tau + A}$ is taken together with $\frac{1}{ (\abs{g}^2)^q }$ (when $q \ge 1$) , it is bounded away
from zero \cite[p.1443]{K1}. Our $x$ is of the form $x = C - \log \abs{g}^2$ for an appropriate constant $C > 0$. It is sufficient to
consider the limit of the function
$$ \frac{1}{x + e^x} = \frac{ \abs{g}^2 }{ e^C + \abs{g}^2 (C - \log \abs{g}^2 )  }     \to \frac{\abs{g}^2}{e^C}     $$  as $x \to +\infty$
or equivalently as $\abs{g}^2 =: y \to 0$. Therefore
the factor $\frac{1}{\tau + A}$ obtains a zero of the same order as $\abs{g}^2$. This means that we cannot remove the
factor $\frac{1}{\tau + A}$ from \eqref{estimate}. However, since $\frac{1}{\tau + A}$ is combined with the weight
$\frac{1}{ \abs{g}^2 }$ (where $q \ge 1$), we can extend the $h_i$'s we want from $X
\setminus H$ to $X$ as in the standard Procedures~\ref{sdsd}. This concludes the proof of Theorem~\ref{maind}.

\qa

\begin{remark1}

   One might want to see what happens when we formulate Skoda's basic estimate
   replacing $ q \log (\abs{g}^2 e^{-\eta}) $ by $ q \log \abs{g}^2$ and adjusting other
   parts accordingly. This seems a natural thing to try since $q \log \abs{g}^2 $ gives a natural singular
   hermitian metric for the line bundle $q G$, without having to measure it against another metric of $g$,
   that is $e^{-\eta}$. However, it seems that such formulation of Skoda's basic estimate cannot be used 
    since in the inequality \cite{Vd} p.19 (15), we will not be able to combine the two terms $-q \tau (B - 1)$ and $ \tau \phi' (\lambda) + 1 + F' (\lambda)$.

\end{remark1}

\section{Pseudo-division and Finite generation}\label{section 6}

We first observe the following two examples which show that Skoda division theorem even in its greatest generality (as in Theorem~\ref{maind}) does not capture all possible instances of division of line bundle sections. In the first example, note that the factor $\min (p, n+1) =3$ in the vanishing order type division can be indeed lowered to $1$.

\begin{example}\label{ex11}

 Let $X = \PP^2$ and let $G = \OO(2), H = \OO$ with three sections $g_1 = xy - z^2, g_2 = xy - 2z^2 , g_3 = y^2 - xz $
  of $G$. Thus  $p = 3$ and  $q = 2$. All global sections of $K_X +  (q+1) G = \OO(3)$ except $x^3$ are divided by
  $g_1, g_2, g_3$ such as $xy^2 = g_1 (2y) + g_2 (y)  $ and $xz^2 = g_1 (x) + g_2 (-x)   $. Let $a = [1,0,0]$. Since locally $ g_1 = y - z^2  ,  g_2 = y - 2z^2  , g_3 =  y^2 - z   $ around $a$, we see that
 $\JJ ( 2 \log \abs{g}^2)  =  \mathfrak{m}_a  $ and  $\JJ ( 3 \log \abs{g}^2)  =  \mathfrak{m}_a^2  $. Hence we
 conclude
 that a section of $\OO(3)$ only needs to vanish along  $\JJ ( 2 \log \abs{g}^2)$, not along  $\JJ ( 3 \log
 \abs{g}^2)$ in order to be divided by $g_1, g_2, g_3$. Note that the only distinguished subvariety here is the point $Z_1 = a$ and the associated coefficient is $r_1 = 1$.

\end{example}

\begin{example}\label{limited}

 Under the setting of Theorem~\ref{maind}, for some integer $k$ with $1 \le k < p$ and $k \le n$, suppose
 that sections  $g_{k+1}, \cdots, g_p$ are linear combinations of $g_1, \cdots, g_k$. Then the metric given by
 $\log (\abs{g_1}^2 + \cdots + \abs{g_k}^2)$ is equivalent to that given by $\log (\abs{g_1}^2 + \cdots + \abs{g_p}^2)$.
 Therefore in this case, we can divide a section $f$ if it is only $L^2$ with respect to the $k$-th power of
  $\frac{1}{ \abs{g_1}^2 + \cdots + \abs{g_p}^2  }$ which is a weaker condition than the $p$-th power required by Theorem~\ref{maind}.

\end{example}

 Toward finite generation of section rings, the following basic lemma can be used once one has vanishing order $1$ division with one distinguished subvariety $Z_1$ and $r_1 = 1$ as discussed in the Introduction. The map $\rho$ shall be restriction of sections to $Z_1$.

\begin{proposition}\label{fglemma}
 Let $R := \oplus_{m \ge 0} R_m$ and $S := \oplus_{m \ge 0} S_m $ be two graded subrings of complete section rings on proper normal varieties over a field $k$. Let $\rho: R  \to S$ be a grade-preserving surjective morphism over $k$. Suppose that

\begin{enumerate}
    \item $\oplus_{m \ge 0} S_m$ is finitely generated as a graded algebra over $k$, and

    \item There exist sections $d_1, \cdots, d_k \in R_{m_1}$ for some $m_1 \ge 1$ such that

    $$\forall m \ge m_1 \text{\; and \;} \forall s \in R_m \cap \ker \rho \Rightarrow \exists q_1, \cdots, q_k \in R_{m- m_1} \text{ such that } s = \sum d_i q_i .$$

\end{enumerate}

\noi Then $\oplus_{m \ge 0} R_m$ is finitely generated.

\end{proposition}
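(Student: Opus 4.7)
The plan is to run a standard lifting-plus-induction argument, using condition (1) to produce generators in high degrees and condition (2) to kill the kernel by a descending induction in the grading. The key point, which follows from $R$ being a graded subring of a section ring on a proper normal variety over $k$, is that each homogeneous piece $R_m$ is a finite-dimensional $k$-vector space.

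First I would pick finitely many homogeneous generators $s_1, \ldots, s_\ell$ of $S$ as a graded $k$-algebra, allowed by (1). Using surjectivity of $\rho$, lift each $s_j$ to a homogeneous element $r_j \in R_{\deg s_j}$. Next, for each $0 \le m \le m_1 - 1$, choose a $k$-basis $B_m$ of the finite-dimensional vector space $R_m$. I then claim that the finite set
$$ T := \{r_1, \ldots, r_\ell\} \cup \{d_1, \ldots, d_k\} \cup \bigcup_{m=0}^{m_1 - 1} B_m $$
generates $R$ as a $k$-algebra.

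The claim is proved by induction on $m$, showing $R_m \subseteq k[T]$ for all $m \ge 0$. For $m < m_1$ this is immediate from the choice of $B_m$. For $m \ge m_1$, take any $f \in R_m$. Since $\rho$ is grade-preserving and the $s_j$ are homogeneous, $\rho(f) \in S_m$ can be written as $P(s_1, \ldots, s_\ell)$ for some $k$-polynomial $P$ all of whose monomials have weighted degree $m$. The element $\widetilde P := P(r_1, \ldots, r_\ell) \in R_m$ satisfies $\rho(\widetilde P) = \rho(f)$, hence $f - \widetilde P \in R_m \cap \ker \rho$. By condition (2), we may write
$$ f - \widetilde P = \sum_{i=1}^{k} d_i q_i \qquad \text{with } q_i \in R_{m - m_1}. $$
Since $m - m_1 < m$, the inductive hypothesis gives $q_i \in k[T]$, hence $f \in k[T]$, completing the induction.

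I do not anticipate a serious obstacle; the only things to check carefully are (i) that the $k$-polynomial expressing $\rho(f)$ in terms of the $s_j$ can be chosen to be grading-homogeneous of degree $m$, which is automatic because $S$ is graded with homogeneous generators, and (ii) that $R_m$ is finite-dimensional over $k$ in low degrees, which is where the hypothesis that $R$ sits inside a section ring on a proper normal variety is used. The argument does not use normality of the varieties beyond ensuring that $R$ and $S$ embed into honest graded rings with finite-dimensional pieces, and it uses none of the analytic machinery of the earlier sections.
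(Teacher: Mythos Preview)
Your proposal is correct and follows essentially the same argument as the paper's own proof: lift a finite generating set of $S$ through $\rho$, adjoin the $d_i$'s and $k$-bases of the low-degree pieces $R_m$, and prove by induction on $m$ that every $R_m$ lies in the subalgebra they generate, using condition (2) to handle $f - \widetilde P \in \ker\rho$. Your write-up is in fact slightly more careful than the paper's, in that you explicitly note the homogeneity of the polynomial $P$ and the role of finite-dimensionality of the $R_m$.
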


\begin{proof}

 Let $g_1, \cdots, g_l$ be the generators of $S$ as a graded algebra over $k$. Since $\rho$ is
 surjective, we can choose $g_1', \cdots, g_l'$ such that $g_i' \in \rho^{-1}(g_i)$ for each $i$.
 Now let $y \in R_m$ be an arbitrary homogeneous element of $R$.  The image $\rho(y)$ is written as
  $p(g_1, \cdots, g_l)$ where $p(x_1, \cdots, x_l)$ is a polynomial. Then $y - p(g_1',\cdots,g_l') \in R $ is
  in the kernel of $\rho$, so we can apply the second condition in the statement. By induction on $m$, this shows
  that $R$ is generated as a graded algebra over $k$ by $d_1, \cdots, d_k, g_1', \cdots, g_l'$ and the basis elements
  of $R_1, \cdots, R_{m_1}$ (which are finite dimensional vector spaces over $k$).

\end{proof}

\qa

We will replace division by pseudo-division in Proposition~\ref{fglemma}. To motivate the definition of pseudo-division, let us consider the following idea of  \cite{FO}. Let $f \in H^0(X, K_X + H
+ qG)$ and $g_1, \cdots, g_p \in H^0(X,G)$ be sections of line bundles and suppose that we want to divide $f$ by
$g_1, \cdots, g_p$.
 For example, assume that $Z \subset X$ is a smooth subvariety of codimension $2$ and that $\dim X = 3$. Suppose that
  $Z$ is
defined by $g_1, g_2, g_3$. Since $\langle g_1, g_2 \rangle \supset I_Z$, if a section $f$ just vanishes along $Z$,
we may not have

$$ \int_X \abs{f}^2 \frac{1}{ (\abs{g_1}^2 + \abs{g_2}^2)^2 }  < \infty   .$$

\noindent However, it may be possible that with some auxiliary section $w_3$, we may have

$$ \int_X \abs{f w_3}^2 \frac{1}{ (\abs{g_1}^2 + \abs{g_2}^2)^2 }  < \infty   $$

\noindent so that we can apply the Skoda division and obtain $f w_3 = g_1 y_1 + g_2 y_2$. It would be even better 
if we can further divide $y_1,y_2$ and get $y_1 = w_1 h_{11} + w_2 h_{12} + w_3 h_{13} $ and $y_2 = w_1 h_{21} + w_2 h_{22} +
w_3 h_{23}$ for some $h_{ij}$'s.   In general, we arrive at the following

\begin{definition}\label{psdiv}

 We say that $f$ is \textbf{pseudo-divided} by $g_1, \cdots, g_p$ if there exist a line bundle $W$ and nonzero
 holomorphic sections $w_1, \cdots, w_t \in H^0(X, W)$ such that for each $1 \le i \le t$, there exist sections
  $h_{ijk} \in H^0(X, H)$ ($j = 1, \cdots, p$, $k = 1, \cdots, t$) satisfying

\begin{equation}\label{pdiv}
  f w_i = g_1 H_{i1} + g_2 H_{i2}  + \cdots + g_p H_{ip}  
\end{equation}  
    where $H_{ij} := \sum_{k=1}^t w_k h_{ijk} $.

\end{definition}

\noi Using the usual determinant argument (as in e.g. \cite[Ch.1 Prop. (2.3)]{Ne}), it follows that
  there is a monic polynomial of $f$,  $p(f) \in H^0(X, k(G+ H))$ (for $k\ge 0$) whose coefficients are generated by $g_1, \cdots, g_p$ and $h_{ijk}$'s
  over $k$ such that $p(f) w_i = 0 $ for each $1 \le i \le t$. Since not every $w_i$ is constantly zero, we have $p(f) = 0$
  as an equality in $H^0(X, k(G+ H))$.

 If we choose the linear system $\Span \{w_1, \cdots, w_t\}$ to be base point free, then any section will have finite $L^2$ norm with respect to the
weight given by the set $w_1, \cdots, w_t$  since it has no poles. However in practice, pseudo-division \eqref{pdiv} will follow from Skoda type division such as Theorem~\ref{maind}. For this reason,  we cannot arbitrarily choose $W$ as a nice (i.e. base-point-free or positive) line
bundle, since when you further divide $H_{ij}$ by $w_k$'s, you need some multiple of $G$ to include that
nice line bundle $W$.

 One important motivation for Definition~\ref{psdiv} is that we can replace `division' by `pseudo-division' in the `finite generation
 lemma' Proposition~\ref{fglemma}.
  We will show this in Theorem~\ref{fglemma2} after we recall some basic notions. Working with section rings, the familiar
  algebraic notion of integral extension between rings appears naturally and turns out to be very useful. We have the
  following version of it for a section ring:  let $X$ be a normal projective variety and $F$ a holomorphic line bundle on $X$.

\begin{definition}

 Let $S := \oplus_{m \ge 1} S_m \subset R(F)$ be a section ring. An element $s \in H^0(X, rF)$ is said to be \textbf{integral} over $S$  if it satisfies a monic equation

 $$ s^k + a_{1} s^{k-1} + \cdots + a_{k-1} s + a_k = 0 $$

\noi in $H^0(X, rkF)$ with $a_i \in S_{ir}$, we have $s \in S_r$.

\end{definition}

\noindent Given two graded section rings $S := \oplus_{m \ge 1} S_m \subset T := \oplus_{m \ge 1} T_m \subset R(F)$,
we say $T$ is \textbf{integral over} $S$ if each homogeneous element of $T$ is integral over $S$.  The integral
closure in $R(F)$ exists for $S$.

\begin{theorem}\label{fglemma2}

 In the statement of Proposition~\ref{fglemma}, it is sufficient only to pseudo-divide $s$ by $d_1, \cdots, d_k$.

\end{theorem}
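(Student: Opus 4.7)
My plan is to adapt the inductive argument in the proof of Proposition~\ref{fglemma} by replacing the statement ``$s$ is divided by $d_1,\dots,d_k$'' with ``$s$ is integral over a fixed finitely generated subring of $R$,'' then upgrading integrality to finite generation via the Japanese property of finitely generated $k$-algebras. The crucial tool is the determinant/adjugate observation already recorded just after Definition~\ref{psdiv}.

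Specifically, given $s \in R_m \cap \ker\rho$ pseudo-divided by $d_1,\dots,d_k$ with auxiliary sections $w_1,\dots,w_t$ and coefficients $h_{ijl}$, I rewrite the defining relations $s w_i = \sum_j d_j \sum_l w_l h_{ijl}$ as $\sum_l (s\,\delta_{il} - c_{il})\,w_l = 0$ with $c_{il} := \sum_j d_j h_{ijl}$. Applying the adjugate of the matrix $M := (s\,\delta_{il} - c_{il})$ to the nonzero vector $w=(w_1,\dots,w_t)^T$ in the integral domain $R$ gives $\det(M) = 0$. Expanding this characteristic polynomial yields a monic equation
\[ s^t + a_1 s^{t-1} + \cdots + a_t = 0, \]
where each $a_n$ is a polynomial expression in the $d_j$ and the $h_{ijl}$ lying in the ideal power $(d_1,\dots,d_k)^n$; critically, the $w_i$ have disappeared from the coefficients.

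Next I set $m_0 := \max(m_1, \max_i \deg g_i')$ and let $A_0 \subset R$ be the $k$-subalgebra generated by $d_1,\dots,d_k$, by $g_1',\dots,g_l'$, and by a $k$-basis of the finite-dimensional space $R_1 \oplus \cdots \oplus R_{m_0}$; this $A_0$ is a finitely generated $k$-algebra. Let $A$ denote the integral closure of $A_0$ inside $R$, which is automatically integrally closed in $R$ by transitivity of integrality. I will show by induction on $m$ that $R_m \subset A$. The base case $m \le m_0$ is immediate. For $m > m_0$, take $y \in R_m$, write $\rho(y) = p(g_1,\dots,g_l)$ and set $s := y - p(g_1',\dots,g_l') \in R_m \cap \ker\rho$ exactly as in the proof of Proposition~\ref{fglemma}. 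The monic equation above has coefficients involving only $d_j \in A_0$ and $h_{ijl} \in R_{m - m_1}$. Since $m - m_1 < m$, the inductive hypothesis places each $h_{ijl}$ in $A$, so all $a_n$ lie in $A$. Integral closedness of $A$ in $R$ forces $s \in A$, and hence $y = s + p(g_1',\dots,g_l') \in A$.

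Consequently $R$ is integral over the finitely generated $k$-algebra $A_0$. To conclude finite generation of $R$ over $k$, I would invoke the Japanese (N--2) property of $A_0$: the integral closure of $A_0$ in any finite field extension of $\mathrm{Frac}(A_0)$ is a finitely generated $A_0$-module, and any $A_0$-submodule of it is then itself finitely generated by Noetherianity of $A_0$. The main obstacle I anticipate is verifying that $\mathrm{Frac}(R)/\mathrm{Frac}(A_0)$ is actually a finite field extension---algebraicity follows from integrality, but turning ``every homogeneous element of $R$ is integral over $A_0$'' into a uniform finiteness statement for the field extension requires careful use of the graded finite-dimensional structure of section rings together with the surjectivity of $\rho$ onto the finitely generated $S$ (possibly via passage to a Veronese subring where the degrees of the monic equations can be controlled uniformly). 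Once finiteness of the extension is secured, $R$ embeds in a finitely generated $A_0$-module, and therefore $R$ itself is a finitely generated $k$-algebra.
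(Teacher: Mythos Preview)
Your argument is essentially the paper's own proof. The paper defines the analogous subring $T$ (your $A_0$), uses the same determinant/adjugate trick recorded after Definition~\ref{psdiv} to obtain a monic equation for $y-y'$ with coefficients built from the $d_j$ and the $h_{ijk}\in R_{<m}$, and then runs the same induction on degree to conclude that $R$ is integral over $T$; it then passes from integrality to module-finiteness via the fraction fields, exactly as you propose. Your use of the integral closure $A$ in place of the paper's towers $T'=T[R_1,\dots,R_{m-1}]$ is a cosmetic streamlining, not a different idea.

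On the point you flag as an obstacle: the paper handles the finiteness of $F_R/F_T$ just as tersely, asserting equality of transcendence degrees and then ``finite algebraic extension'' without further comment. The missing sentence in both arguments is that $F_R$ is a finitely generated field extension of $k$: since $R$ sits inside a section ring on a proper variety, the degree-$0$ part of $\mathrm{Frac}(R)$ embeds in the function field $k(X)$, and $\mathrm{Frac}(R)$ is generated over this by a single homogeneous element, so $F_R$ is finitely generated over $k$ and hence over $F_T$; being algebraic and finitely generated, it is finite. With that one line inserted, your Japanese/Noetherian conclusion goes through cleanly and matches the paper's.
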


\begin{proof}

 Let $\rho: R := \oplus_{m \ge 0} R_m \to S := \oplus_{m \ge 0} S_m $ be a surjective morphism between two graded algebras over $k$.  Suppose that $\oplus_{m \ge 0} S_m$ is finitely generated as a graded algebra over $k$.

 Let $T$ be the graded subring of $R$ generated by $d_1, \cdots, d_k$, $g_1', \cdots, g_l'$ and
 the basis elements of $R_1, \cdots, R_{m_1}$. We will show that $R$ is integral extension over $T$.
  Consider the fields of fractions $F_T$ and $F_R$. They have the same transcendence degrees because of the integral extension.
    Since the fields of fractions $F_T \subset F_R$ is a finite algebraic extension, it follows that $R$ is a finite $T$ module and hence $R$ is finitely generated over $k$.

 Let $y \in R_m$ be an arbitrary homogeneous element of $R$. We can write $\rho(y)$ as a polynomial $p(g_1, \cdots, g_l)$ of the generators $g_1, \cdots, g_l$ of $S$. For the same polynomial $p$, take $y' := p(g_1', \cdots, g_l')$. Then $y - y' \in \ker \rho$ since $\rho (y - y') = \rho(y) - \rho(y') = 0$. As the result of pseudo-division of $y-y'$, we have a monic polynomial

 $$ (y - y')^d + c_1 (y- y')^{d-1} + \cdots + c_{d-1} (y - y') + c_d = 0  $$

\noi where the coefficient $c_i$ is generated by $d_1, \cdots, d_k$ and the basis elements of $R_j$ for $0 \le j < m $.
 By the inductive hypothesis, $T' := T[ R_1, \cdots, R_{m-1}]$ is a finite $T$ module while $T' [y]$ is a finite $T'$ module. Then $T' [y]$ is a finite $T$ module.

 Let $b_1, \cdots, b_q$ be generators for $T' [y]$ over $T$. Then there exists an integer $k \ge 1$ such that  $y^k = t_1 b_1 + \cdots + t_q b_q$ for some $t_1, \cdots, t_q \in T$ and the $y$ terms in $b_1, \cdots, b_q$ has the exponent less than $k$. It follows that $y$ is integral over $T$, since by induction, every element of $R_1, \cdots, R_{m-1}$ is integral over $T$.

\end{proof}

 The above theorem was about how one can use pseudo-division if it is available. Theorem~\ref{pdiv1} in the Introduction is about how one can obtain pseudo-divison using Skoda division, resulting in vanishing order $1$ pseudo-division for an ample line bundle.

\begin{proof}[Proof of Theorem~\ref{pdiv1}]

 Since $g_1, \cdots, g_p$ are defining equations for $Z$ of codimension, say,  $k \ge 1$, each point $x \in Z$ has a Zariski open neighborhood $U_x$ where there exists a multi-index $I = (i_1 < \cdots < i_k)$ such that  $g_{i_1}, \cdots, g_{i_k}$ define $Z$.   In $U_x$, shrinking to an open subset if necessary, one can find local analytic  coordinates $z_1 = g_{i_1}, \cdots, z_k = g_{i_k}, z_{k+1}, \cdots, z_n$. Therefore the singular weight $$ \Psi_I :=  \left( \frac{1}{\abs{ g_{i_1}}^2 + \cdots + \abs{g_{i_k}}^2 } \right)^k $$ in $U_x$ has poles (i.e. $\Psi_I = \infty$) exactly along $Z \cap U_x$. Outside $U_x$, there may be additional poles along the subscheme $S_I$ corresponding to the multiplier ideal sheaf $\JJ (\Psi_I)$. Because of the additional poles, in general $ \int_X \abs{f}^2  \Psi_I = \infty $.   (As usual for an adjoint line bundle, we understand the integrand as well-defined without choosing a measure, once a hermitian metric of $mG$ is specified. Here the metric is given by the product of $\Psi$ for $kG$ and a choice of a smooth hermitian metric for $(m-k)G$. )
 
  To obtain the pseudo-division as in the statement, we let $W = r G$ where $r \ge 1$ is an integer to be determined later. We need sections $w_i \in H^0(X, W)$ such that   
  
\begin{equation}\label{wi}
   \int_X \abs{f}^2 \abs{w_i}^2 \Psi_I < \infty 
 \end{equation}
   which enables us to apply Theorem~\ref{maind} to the section $f w_i$ as in Definition~\ref{psdiv}.  But we need a base point free linear system of such $w_i$'s  to obtain  $H_{ij} := \sum_{k=1}^t w_k h_{ijk} $ in  Definition~\ref{psdiv}, applying Theorem~\ref{maind} again (note that in the case of base point free linear system, the criterion for division $I  < \infty$ is automatically satisfied). 
   
    For this purpose, we need to choose $w_i$ not to vanish along $Z$, but only vanishing along the rest of the components (i.e. \textit{additional poles}) in the subscheme $S_I = Z (\JJ (\Psi_I))$ above. More precisely, those components are described in terms of primary decomposition of the ideal sheaf generated by $g_{i_1}, \cdots, g_{i_k}$ as in Example 9.6.14 \cite{L}. 
  
  For the moment, for the sake of simplicity, let us only consider the underlying closed subsets of these subschemes. Then we can write $S_I = Z \cup Y_I$ where $Y_I$ is the union of the rest of the components. 
 Since the condition \eqref{wi} defines coherent ideal sheaves on the components of $Y_I$, by choosing sufficiently large integer $r$, there exists a linear system $V_I$ of $w_i$'s satisfying \eqref{wi} whose base locus is $Y_I$ (as a set).  
 
 We only need a finite number of $I$'s to account for all the points $x$ on $Z$ and may assume that we chose a minimal set $T$ of such $I$'s.  Then we can obtain linear systems $V_I \subset H^0 (X, r G)$ for each $I \in T$ keeping the same $r \ge 1$.  Taking the span of $V_I$'s in $H^0 (X, rG)$, we obtain the base point free linear system of $w_i$'s we wanted, i.e. $\cap_{I \in T} Y_I = \emptyset $ for the following reason: suppose that the intersection is not empty and contain a point $y \in X$. Since $g_i (y) = 0$ for every $i$, we have $y \in Z$. But there exists $J=(j_1, \cdots, j_k) \in T$ such that $g_{j_1}, \cdots, g_{j_k}$ define $Z$ in a neighborhood of $y$ as in the beginning.   This completes the proof. 

\end{proof}

Finally we give a remark on the general case of $g_1, \cdots, g_p$ where one has many distinguished subvarieties, say $Z_j$ with $r_j > 0$ for a finite set $T$ of $j$'s.

\begin{remark}\label{general}

Suppose that for each $j \in T$, the vanishing order condition $\ord_{Z_j} f \ge r_j$ is equivalent to the condition 

\begin{equation}\label{haha}
 \abs{f}^2    \left(   \frac{1}{\sum^{k}_{i=1} \abs{g_{j_i}}^2 }   \right)^{k'}  \in L^1_{loc}  
\end{equation} 
  on a Zariski open subset of $X$ where $k' = \min (k, n+1)$. Then one can find auxiliary sections $w_l$'s of $W$ so that \eqref{haha} holds everywhere on $X$ when $f$ is replaced by $f w_l$.  From this point on, it will be more complicated in general than the case of Theorem~\ref{pdiv1}  to make a good choice of such  $w_l$'s and obtain pseudo-division.

\end{remark}

\footnotesize

\bibliographystyle{amsplain}

\qa

\qa

\normalsize

\noi \textsc{Dano Kim}

\noi Department of Mathematical Sciences, Seoul National University

\noi Gwanak-ro 1, Gwanak-gu, Seoul, Korea 151-747

\noi Email address: kimdano@snu.ac.kr

\noi

% to show  :   Wulcan,  Qingchun Ji,  ... Ein , 

\end{document}